\newcommand\shorttitle{Combinatorial K-systoles on a punctured torus and a pair of pants}
\newcommand\authors{E. A. A. Diop, M. Gaye, A. K. Sane}
\ifodd\value{page}
\authors
\shorttitle
\ifodd\value{page}
\theoremstyle{plain}
\newtheorem{theorem}{Theorem}
\newtheorem*{conj}{Conjecture}
\newtheorem*{thm}{Theorem}
\newtheorem{r emark}[theorem]{Remark}
\newtheorem{lemma}{Lemma}
\newtheorem{exercise}[theorem]{Exercise}
\newtheorem{proposition}{Proposition}
\newtheorem{definition}[theorem]{Definition}
\newtheorem{corollary}[theorem]{Corollary}
\newtheorem{example}[theorem]{Exemple}
\newtheorem{prop}{Propri\'et\'e}[section]
\newcommand{\Z}{\mathbb Z}
\newcommand{\N}{\mathbb N}
\newcommand{\D}{\mathbb D}
\newcommand{\G}{\mathcal{G}}
\newcommand{\ds}{\displaystyle}
\newcommand{\beq}{\begin{equation}}
\newcommand{\eeq}{\end{equation}}
\newcommand{\beqn}{\begin{eqnarray}}
\newcommand{\eeqn}{\end{eqnarray}}
\newcommand{\bpro}{\begin{proposition}}
\newcommand{\epro}{\end{proposition}}
\newcommand{\blem}{\begin{lemma}}
\newcommand{\elem}{\end{lemma}}
\newcommand{\bdfn}{\begin{definition}}
\newcommand{\edfn}{\end{definition}}
\newcommand{\bcor}{\begin{corollary}}
\newcommand{\ecor}{\end{corollary}}
\newcommand{\bthm}{\begin{theorem}}
\newcommand{\ethm}{\end{theorem}}
\newcommand{\bex}{\begin{example}}
\newcommand{\eex}{\end{example}}
\newcommand{\brmq}{\begin{remark}}
\newcommand{\ermq}{\end{remark}}
\newcommand{\benum}{\begin{enumerate}}
\newcommand{\eenum}{\end{enumerate}}
\newcommand{\bitem}{\begin{itemize}}
\newcommand{\eitem}{\end{itemize}}
\newcommand{\bexer}{\begin{exercise}}
\newcommand{\eexer}{\end{exercise}}
\newcommand{\bproof}{\begin{proof}}
\newcommand{\eproof}{\end{proof}}
\newcommand{\eprop} {\end{prop} }
\newcommand{\bprop}{\begin{prop}}
\theoremstyle{plain}
\tikzstyle directed=[postaction={decorate,decoration={markings,
    mark=at position .65 with {\arrow{latex}}}}]
\providecommand{\thanks}[1]
\title{Combinatorial $k$-systoles on a punctured torus and a pair of pants}
\author{ElHadji Abdou Aziz Diop, Masseye Gaye,  Abdoul Karim Sane}
\begin{document}
\maketitle						
\abstract
In this paper\footnote{\bf 2010 Mathematics Subject Classification: Primary: 32G15. Secondary 30F40.

{\bf Keywords:} closed geodesics, self-intersection, $k$-systole.}, $S$ denotes a surface homeomorphic to a punctured torus or a pair of pants. Our interest is the study of \emph{\textbf{combinatorial $k$-systoles}} that is closed curves with self-intersection numbers greater than $k$ and with least combinatorial length. We show that the maximal intersection number $I^c_k$ of combinatorial $k$-systoles of $S$ grows like ~$k$ and $\underset{k\rightarrow+\infty}{\limsup}(I^c_k-k)=+\infty$.

This result, in case of a pair of pants and a punctured torus, is a positive response to the combinatorial version of the Erlandsson - Parlier conjecture, originally formulated for the geometric length.

\section{Introduction}

Let $\Sigma$ be a hyperbolic surface of finite type and $\mathcal{G}$ the set of primitive closed geodesics on $\Sigma$. For $\gamma\in\mathcal{G}$, let $l(\gamma)$ denotes its geometric length namely its length relatively to the hyperbolic metric and $i(\gamma,\gamma)$ its self-intersection number. For every $k\in\N$, we consider the set $$\mathcal{G}_{\geq k}=\{\gamma\in\mathcal{G}:\ i(\gamma,\gamma)\geq k \},$$ of closed geodesics on $\Sigma$ that self-intersects at least $k$ times and we set $$s_k:=\inf\{l(\gamma), \gamma\in\G_{\geq k}\}.$$ By discreteness of the length spectrum, the infimum is achieved by some geodesics. A \emph{\textbf{$k$-systole}} is a closed geodesic in $\G_{\geq k}$ denoted by $\gamma_k$ such that $l(\gamma_k)=s_k$. The $k$-systoles may have different self-intersection numbers and understanding the self-intersection spectrum of $k$-systoles is an interesting question. For instance when $k=0$, by suitably smoothing self-intersection points, one can make a closed geodesic simple and shorter. It follows from this heuristic argument that systoles (0-systoles) are simple (unless on a pair of pants with three cusps where $0$-systoles have self-intersection number equal to one). For $k=1$, Buser \cite{Bus} showed that $1$-systoles have one self-intersection point.  

When $k\geq 2$, the self-intersection spectrum of $k$-systoles is more complicated.

In \cite{erlandsson2016short}, V. Erlandson and H. Parlier studied:
$$I_k(\Sigma):=\max\{i(\gamma_k,\gamma_k)\};$$
the maximal self-intersection number of $k$-systoles introduced by Basmajian and Buser. They obtained the following:  
\begin{thm}[Erlandsson-Parlier \cite{erlandsson2016short}]
Let $\Sigma$ be an orientable complete hyperbolic surface with non-abelian fundamental group. Then
$$I_k(\Sigma)\leq 31\displaystyle{\sqrt{k+\frac{1}{4}}\left(16\sqrt{k+\frac{1}{4}}+1\right)}$$
\end{thm}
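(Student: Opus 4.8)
The plan is to control $I_k(\Sigma)$ by squeezing the length $s_k$ of a $k$-systole between two universal bounds and then converting a length estimate into a self-intersection estimate. Concretely I would establish two quantitative inequalities, both uniform over all surfaces $\Sigma$ with non-abelian $\pi_1$: first, a universal quadratic bound asserting that every closed geodesic $\gamma$ satisfies $i(\gamma,\gamma)\le A\,l(\gamma)^2+B\,l(\gamma)$ for explicit constants $A,B$; second, an upper bound $s_k\le C\sqrt{k+\tfrac14}+D$ on the length of a $k$-systole. Granting both, any $k$-systole $\gamma_k$ has $l(\gamma_k)=s_k\le C\sqrt{k+\tfrac14}+D$, hence $i(\gamma_k,\gamma_k)\le A\,s_k^2+B\,s_k$; expanding this in $\sqrt{k+\tfrac14}$ and matching constants should collapse the product to the stated estimate $I_k(\Sigma)\le 31\sqrt{k+\tfrac14}\bigl(16\sqrt{k+\tfrac14}+1\bigr)$.

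For the universal quadratic bound I would argue by a packing/area estimate. Each transverse self-intersection of a hyperbolic geodesic forces a definite amount of nearby length: two geodesic strands cannot cross and then close up within an arbitrarily small region, by the collar lemma together with a lower bound on the crossing configuration. Quantitatively I would cover $\gamma$ by subarcs and show that the number of crossing pairs is at most a constant multiple of $\bigl(l(\gamma)/\mu\bigr)^2$, where $\mu$ is a universal length scale such as $2\,\mathrm{arcsinh}(1)$ coming from the standard collar width; this is what makes the constant independent of $\Sigma$. The factor $\tfrac{31}{16}$ surfacing after combination should come out of the sharp form of this count.

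For the upper bound on $s_k$ I would use the non-abelian hypothesis to manufacture cheap self-intersections explicitly. Since $\pi_1(\Sigma)$ is non-abelian there exist two closed geodesics that cross (a filling, or at least non-commuting, pair), and the geodesic representative of a word such as $\alpha^n\beta$ produces a family $\delta_n$ whose self-intersection number is of order $n^2$, at least $n(n-1)$ in the configurations used, while $l(\delta_n)$ grows only linearly in $n$. The quantity $\sqrt{k+\tfrac14}$ then appears naturally by inverting the threshold $n(n-1)\ge k$, i.e. $n\ge\tfrac12+\sqrt{k+\tfrac14}$, so the least admissible $n$ satisfies $n\le\sqrt{k+\tfrac14}+\tfrac32$. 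This gives a competitor in $\G_{\ge k}$ of length $\le C\sqrt{k+\tfrac14}+D$, and since $\gamma_k$ minimizes length over $\G_{\ge k}$ we conclude $s_k\le l(\delta_n)$. On a surface with a short systole one instead wraps repeatedly around the short core curve to make the crossings even cheaper, which is what keeps the construction uniform.

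The main obstacle is making both ingredients genuinely uniform in $\Sigma$ while tracking the explicit constants $16$ and $31$. The delicate point is the construction of $\delta_n$: one must guarantee that the geodesic representative of the chosen word really attains at least $k$ self-intersections (word length does not by itself control geometric self-intersection) and simultaneously bound its length independently of the geometry, which forces a case split between the thick and thin parts of $\Sigma$. Aligning the sharp constant in the quadratic counting bound with the length of the explicit competitor, so that the product reduces to exactly $31\sqrt{k+\tfrac14}\bigl(16\sqrt{k+\tfrac14}+1\bigr)$, is the computational heart of the argument.
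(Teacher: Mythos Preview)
The paper does not prove this statement. It is quoted in the introduction as a result of Erlandsson and Parlier from \cite{erlandsson2016short}, cited as background and motivation for the combinatorial questions the paper goes on to study; no argument for it is given anywhere in the text. So there is no ``paper's own proof'' to compare your proposal against.

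For what it is worth, your outline is broadly the right shape for how the original Erlandsson--Parlier argument goes: one combines a universal upper bound of the form $i(\gamma,\gamma)\le c_1\,l(\gamma)^2 + c_2\,l(\gamma)$ with an explicit construction showing $s_k$ is at most a constant times $\sqrt{k}$, and the $\sqrt{k+\tfrac14}$ arises from inverting a quadratic in exactly the way you describe. But the constants and the uniformity in $\Sigma$ are the substance of that paper, not of this one, and verifying your sketch would require going back to \cite{erlandsson2016short} rather than to anything here.
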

Previous works due to A. Basmajian \cite{Bas1} provide an upper bound depending on the topology of the surface when the surface is complete, finite area and finite type. 

Erlandsson and Parlier also showed that when $\Sigma$ is a complete orientable hyperbolic surface of finite type with at least one cusp, there exists two constants $D(\Sigma)$ and $K(\Sigma)$ depending on the geometry (the metric) on $\Sigma$ such that: $$I_k(\Sigma)\leq k+D(\Sigma)\log(k)$$ for all $k>K(\Sigma).$

 T. H. Vo \cite{vo2019short} proved the best known result (at the time of writing) by showing that when $\Sigma$ has at least one cusp  there is a constant $K(\Sigma)$ such that for $k\geq K(\Sigma)$, $k$-systoles are $\emph{\textbf{exact}}$ namely $I_k=k$.

These results imply that for surfaces with at least one cusp, we have: $$\lim\limits_{k\to+\infty}\ds\frac{I_k(\Sigma)}{k}=1\quad\hspace{0.1cm} \mathrm{and}\hspace{0.1cm}\lim\limits_{k\to+\infty} (I_k(\Sigma)-k)=0.$$

\begin{conj}[Erlandsson-Parlier \cite{erlandsson2016short}]
If $\Sigma$ is a compact (and in particular has no cusp) hyperbolic surface, then 
$$\underset{k\rightarrow+\infty}{\limsup} \hspace{0.2cm}(I_k(\Sigma)-k)=+\infty.$$
\end{conj}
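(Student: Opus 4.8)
The plan is to recast the statement as a property of an ``envelope'' step function and then to locate all the content in a quantization phenomenon forced by compactness. First I would record the crucial reformulation. Set $i^*(l):=\max\{i(\gamma,\gamma):\ \gamma\in\G,\ l(\gamma)\le l\}$, the largest self-intersection number attainable with length at most $l$; this is a non-decreasing step function of $l$. Since $s_k$ is exactly the first length at which $i^*$ reaches the value $k$, a one-line argument shows that a $k$-systole of maximal self-intersection realizes $i^*(s_k)$, so that $I_k=i^*(s_k)$. Consequently $I_k>k$ holds if and only if $s_k=s_{k+1}$, and more precisely the quantity $I_k-k$ is governed by the size of the jump of $i^*$ at $l=s_k$ (equivalently, by the length of the plateau of $k\mapsto s_k$ on which $k$ sits). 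Hence the conjecture is \emph{equivalent} to the assertion that the jumps of $i^*$ are unbounded, i.e. that the set of self-intersection numbers occurring as ``records'' of $i^*$ has arbitrarily large gaps.

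Next I would pin down the order of growth of the envelope. The quadratic upper bound $i(\gamma,\gamma)\le C\, l(\gamma)^2$ (valid here because $\Sigma$ compact has injectivity radius $\rho>0$, in the spirit of the Basmajian and Erlandsson--Parlier estimates quoted above) together with an explicit lower bound from a high-density family of curves gives
$$c\,l^2\ \le\ i^*(l)\ \le\ C\,l^2 ,$$
so that $s_k\asymp\sqrt{k}$ and, on any dyadic window, $i^*(\Lambda)-i^*(\Lambda/2)\gtrsim\Lambda^2$. The point of this step is that the \emph{total} rise of $i^*$ up to length $\Lambda$ is quadratic, while each record is a single integer increment; the conjecture therefore asks that these increments not spread out evenly but instead concentrate into rare, large jumps.

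The heart of the argument is to exhibit the jumps explicitly. Because a compact hyperbolic surface has genus at least two, it contains a one-holed-torus subsurface carrying simple closed geodesics $a,b$ with $i(a,b)=1$. I would study the family $\gamma_p$ of geodesics in the classes built from $a^{p}b^{p}$ (a genuinely filling, non-simple family), for which the combinatorial/continued-fraction technology used elsewhere in this paper yields $i(\gamma_p,\gamma_p)\sim\alpha\, p^{2}$ and $l(\gamma_p)\sim\beta\, p$. The realizable self-intersection values $N_p:=i(\gamma_p,\gamma_p)$ are thus \emph{quantized}, with gaps $N_{p+1}-N_p\sim 2\alpha p\to\infty$, while the corresponding lengths are separated by $L_{p+1}-L_p\sim\beta$. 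If these curves are (asymptotically) extremal, then $i^*$ stays at $N_p$ until length $\approx L_{p+1}$ and then jumps by $\sim 2\alpha p$; translating through $p\sim\sqrt{k/\alpha}$ gives $I_k-k\gtrsim\sqrt{k}$ along the subsequence $k=N_p$, which is far more than $\limsup(I_k-k)=+\infty$.

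The main obstacle is precisely this \emph{extremality/sharpness} step: proving that no geodesic of length below $L_{p+1}$ attains a self-intersection number strictly between $N_p$ and $N_{p+1}$, so that the skipped values genuinely become gaps of $i^*$. This requires an upper bound on $i(\gamma,\gamma)$ in terms of $l(\gamma)$ whose leading constant matches that of the family $\gamma_p$ and whose error term is controlled. Here is where compactness must be used in an essential way: the positive injectivity radius $\rho$ should force a \emph{definite} length cost, bounded below independently of $p$, for each additional self-intersection produced along the extremal configuration. This is exactly the feature absent near a cusp, where windings add self-intersections at only logarithmic length cost and thereby allow every intermediate value to be realized (Vo's exactness $I_k=k$). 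Converting ``positive injectivity radius'' into ``definite marginal length per self-intersection near the extremal family'', and thus into the required sparsity of record values, is the step I expect to be genuinely hard and to carry the whole proof.
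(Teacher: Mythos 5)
You were asked to prove a statement that the paper itself does not prove: in the paper this is stated as a \emph{Conjecture}, attributed to Erlandsson and Parlier, and it is still open. What the paper actually establishes (Theorems \ref{thm1} and \ref{thm2}) is a \emph{combinatorial} analogue --- word length in the free group $\pi_1(S)$ in place of geometric length --- and only for a punctured torus and a pair of pants, which are not compact and are exactly the surfaces excluded by the conjecture's hypothesis. So there is no proof in the paper to compare yours against; the only fair comparison is with the combinatorial theorems, whose mechanism your plan partially mirrors.

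Your proposal contains a genuine gap, and you name it yourself: the ``extremality/sharpness'' step, i.e.\ showing that no geodesic of length below $L_{p+1}$ realizes a self-intersection number strictly between $N_p$ and $N_{p+1}$. That step is not a technical loose end --- it \emph{is} the conjecture. Your preliminary reductions are fine (the identity $I_k=i^*(s_k)$, the equivalence of the conjecture with unbounded jumps of the envelope $i^*$, and the quadratic growth $i^*(l)\asymp l^2$), and they are precisely the heuristic behind the paper's combinatorial argument. But in the combinatorial setting the paper closes the corresponding gap using \emph{sharp} upper bounds for self-intersection in terms of word length --- Theorem~\ref{D.G} (Diop--Gaye) for the pair of pants and Theorem~\ref{C.P} (Chas--Phillips) for the punctured torus --- whose leading constants exactly match the explicit families $a(a\bar{b})^n$ and $a^{n+1}b^{n+2}$, so the skipped intersection values are forced to be genuine gaps of the spectrum. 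No bound of matching sharpness is known for geometric length on a compact surface. Worse, your extremal family $a^pb^p$ lives in a one-holed-torus subsurface, while the quantization argument must exclude intermediate self-intersection values among \emph{all} geodesics of the ambient compact surface of comparable length, including the exponentially many that leave the subsurface; positive injectivity radius only yields $i(\gamma,\gamma)\le C\,l(\gamma)^2$, which is far too coarse to detect integer-sized gaps near the extremal constant. Hence the proposal is a reasonable program, but as a proof it fails exactly where you predicted, and nothing in the paper supplies the missing ingredient in the geometric, compact setting.
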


In this article, we consider a combinatorial version of Erlandsson-Parlier conjecture and only for a punctured torus and a pair of pants. 

If $S$ is a surface homeomorphic to a pair of pants or a punctured torus then, $\pi_1(S)$ is a free group of rank 2. Let $\gamma$ be a closed curve in $S$. We refer to the \textit{\textbf{combinatorial length}} of $\gamma$, and we denote it by $L(\gamma)$, as the length of the reduced word $w_{\gamma}$ that corresponds to the homotopy class of $\gamma$ in $\pi_1(S)$ with respect to the standard generating set. The self-intersection number of $\gamma$ is the minimal number of time $\gamma$ self-intersects in its free homotopy class. A  \textbf{\textit{combinatorial $k$-systole}} is a closed curve with self-intersection number greater than $k$ and with the smallest combinatorial length among all closed curves with self-intersection greater than $k$. We denote by $I^c_k$ the maximal self-intersection number of combinatorial $k$-systoles.

\noindent The problem of geometric $k$-systoles translates into a combinatorial one. Turning geometric problems to combinatorial ones is a common practice and one can found examples of this in the works of M. Chas and al.\cite{CCP}, \cite{CP}. Our first result answers to the combinatorial version of Erlandsson-Parlier conjecture:
\begin{theorem}\label{thm1}
Let $S$ be a surface homeomorphic to a pair of pants or a punctured torus. Then,

$$\underset{k\rightarrow+\infty}{\liminf} \hspace{0.2cm}I^c_k-k=0\ \ \mbox{and} \ \ \underset{k\rightarrow+\infty}{\limsup} \hspace{0.2cm}I^c_k-k=+\infty.$$
\end{theorem}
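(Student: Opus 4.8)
The plan is to analyze the combinatorial length and self-intersection number of closed curves in the free group $F_2 = \langle a, b \rangle$ directly via their reduced cyclic words. Since $\pi_1(S) = F_2$, every closed curve is a cyclic reduced word $w$, its combinatorial length $L(\gamma)$ is the word length of $w$, and its self-intersection number $i(\gamma,\gamma)$ is a combinatorial invariant of the cyclic word (computable, for a punctured torus, via the linked-pairs count along the boundary of the universal cover, and for a pair of pants via an analogous count). The strategy splits into two independent estimates matching the two halves of the $\liminf$/$\limsup$ statement.

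\noindent\textbf{The $\liminf$ half ($I^c_k - k = 0$ infinitely often).} Here I would exhibit an explicit family of short words whose self-intersection number equals exactly the threshold $k$, and show these realize the combinatorial $k$-systole length for those values of $k$. The natural candidates are words of the form $a^p b$ or $(ab)^q$ type powers, for which both $L$ and $i$ can be written down in closed form. If $\gamma_k$ achieves self-intersection exactly $k$ and has minimal length among all curves self-intersecting $\geq k$ times, then $I^c_k = k$ for that $k$. Concretely I would: (i) compute $i(a^p b, a^p b)$ and its combinatorial length as functions of $p$; (ii) find the infinite sequence of values $k$ hit exactly by this family; (iii) prove a matching lower bound, namely that no shorter word can have self-intersection $\geq k$, so that the minimizer is exact. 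Establishing (iii) — a length lower bound in terms of self-intersection — is the crux of this half and likely relies on a combinatorial inequality of the form $i(\gamma,\gamma) \leq f(L(\gamma))$ for an explicit $f$.

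\noindent\textbf{The $\limsup$ half ($I^c_k - k \to +\infty$ along a subsequence).} Here the goal is the reverse: find a sequence of values $k$ for which every combinatorial $k$-systole is forced to have self-intersection strictly, and increasingly, larger than $k$. The mechanism is the discreteness and sparsity of the self-intersection spectrum: the set of self-intersection numbers actually realized by short words has gaps, so for $k$ lying just above a realized value, the cheapest way to exceed $k$ is to jump to a word whose self-intersection overshoots $k$ by a growing amount. I would make this quantitative by comparing, for a given length budget $\ell$, the maximum self-intersection $M(\ell)$ achievable — which grows quadratically in $\ell$ by the classical bound — against the spectrum of achievable values just below $M(\ell)$; a counting/pigeonhole argument should show that realized self-intersection values near the top of the range for length $\ell$ are spaced $\Omega(\ell)$ apart, forcing the overshoot $I^c_k - k$ to grow.

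\noindent\textbf{Main obstacle.} The hardest part will be obtaining tight two-sided control of the self-intersection number as a function of the cyclic word, precise enough to pin down both the exact-realizing family (for the $\liminf$) and the spectral gaps (for the $\limsup$). The upper bound $i \leq \binom{L}{2}$-type estimates are standard, but the $\limsup$ claim needs a genuine \emph{gap} statement — that the top of the achievable self-intersection range for a fixed length cannot be approached in unit steps — and this requires either an exact formula for $i$ on a well-chosen family (so the spacing of achieved values is computable) or a structural argument that increasing self-intersection by one necessarily increases length. I expect the punctured torus and the pair of pants to require slightly different self-intersection formulas, so the argument will be carried out case by case, with the pair of pants (free homotopy classes constrained by the three boundary components) handled via its own reduced-word normal form.
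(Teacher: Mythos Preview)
Your overall strategy is exactly the paper's: use a sharp inequality $i(\gamma,\gamma)\le f(L(\gamma))$ with $f$ quadratic, exhibit explicit words attaining $f$, and observe that the sequence of maxima $f(\ell)$ for consecutive lengths $\ell$ has gaps of order $\sqrt{k}$, which simultaneously forces exactness at the maxima (giving the $\liminf$) and forces overshoot just above them (giving the $\limsup$). The ``crux'' inequality you flag is not something you need to prove from scratch: for the punctured torus it is the Chas--Phillips bound $i\le (L-2)^2/4$ (even $L$) and $i\le (L-1)(L-3)/4$ (odd $L$), and for the pair of pants the analogous Diop--Gaye bound $i\le L^2/4$; both are sharp and are the only outside input the paper uses.

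Where your plan goes wrong is in the choice of candidate families. The words $a^p b$ and $(ab)^q$ are far from maximizing self-intersection for their length --- on the punctured torus $a^p b$ is simple (the Chas--Phillips formula gives $i(a^n b^m,a^n b^m)=(n-1)(m-1)$, so $m=1$ yields $0$), and $(ab)^q$ is a power of a simple curve. The families that actually saturate the sharp bounds, and hence drive both halves of the argument, are $a^{n+1}b^{n+1}$ and $a^{n+1}b^{n+2}$ on the punctured torus (self-intersection $n^2$ and $n^2+n$ at lengths $2n+2$ and $2n+3$) and $a(a\bar b)^n$ on the pair of pants (self-intersection $n^2+n$ at length $2n+1$). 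Once you replace your candidates by these, your outline becomes the paper's proof: for $k=n^2+n$ the extremal word is an exact $k$-systole, while for $n^2<k<n^2+n$ any $k$-systole already has the same length and hence self-intersection at most $n^2+n$, which the extremal word attains, so $I^c_k-k$ can be as large as $n-1\to\infty$. No pigeonhole or counting is needed; the gap is read off directly from the quadratic $f$.
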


One way to compute self-intersection numbers of closed curves on surfaces is to use Bowen-Series coding. First, we endow $S$ with a hyperbolic metric and in this case, each homotopy class of an essential closed geodesic contains a unique geodesic and the minimal self-intersection number of a closed curve in this homotopy class is achieved by this geodesic. If $\gamma$ is a closed curve in $S$ and $\alpha$ the unique geodesic in its homotopy class, The coding $w_{\gamma}$ of $\gamma$ is read off from a lift $\widetilde{\gamma}$ of $\gamma$ to the universal cover. From $w_{\gamma}$, one can derive an upper bound of $i(\gamma,\gamma)$ using an argument which we explain in  section \ref{sec2}.\\
Since our result does not depend on the metric on $S$, it remains true when $S$ has a cusp, which contrasts with T. H. Vo's result \cite{vo2019short} in the case of geometric $k$-systoles. This is not so surprising since geometric and combinatorial length are not equivalent when $S$ has at least one cusp.

For a hyperbolic surface without cusps, geometric length and combinatorial length of closed geodesic are coarsely equivalent \cite{Har}. We had expected that Theorem \ref{thm1} extends to the geometric case but it requires further work since the gap (between geometric and combinatorial length) is quite big and one loses a lot of 
information about  self-intersection. Nonetheless, we like to think our result as a first step toward Erlandsson-Parlier conjecture. We also show the following:
\begin{theorem}\label{thm2}
Let $S$ be a surface homeomorphic to a pair of pants or a punctured torus. Then,

$$\underset{k\rightarrow+\infty}{\lim}\frac{I^c_k}{k}=1.$$\end{theorem}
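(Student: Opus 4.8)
The plan is to squeeze $I^c_k$ between $k$ and $k+o(k)$, so that division by $k$ forces the ratio to $1$. The lower estimate is free: by definition every combinatorial $k$-systole lies in $\G_{\geq k}$, hence has self-intersection number at least $k$, so $I^c_k\geq k$ and therefore $\liminf_{k\to+\infty} I^c_k/k\geq 1$. The entire content is thus the matching upper estimate $\limsup_{k\to+\infty} I^c_k/k\leq 1$, which I would derive from the quadratic growth of the maximal self-intersection number as a function of combinatorial length, together with a bound on how fast that maximum can increase.

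Concretely, introduce $M(L):=\max\{i(\gamma,\gamma):\ L(\gamma)\leq L\}$, the largest self-intersection number attainable by a closed curve of combinatorial length at most $L$, and the combinatorial systolic function $\ell(k):=\min\{L:\ M(L)\geq k\}$. Since $M$ is nondecreasing, $\ell(k)$ is exactly the least combinatorial length realizing self-intersection $k$, so a combinatorial $k$-systole has length precisely $\ell(k)$; moreover the curve of length $\ell(k)$ achieving $M(\ell(k))$ is itself such a systole (a shorter curve has self-intersection at most $M(\ell(k)-1)<k$). This gives the clean identity $I^c_k=M(\ell(k))$, together with the sandwich $M(\ell(k)-1)<k\leq M(\ell(k))$. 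Consequently $I^c_k-k=M(\ell(k))-k<M(\ell(k))-M(\ell(k)-1)$, and the problem reduces entirely to controlling the increments of $M$.

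The key estimate, which I would extract from the Bowen–Series coding analysis of section \ref{sec2} (and which presumably underlies the computations behind Theorem \ref{thm1}), is that $M$ grows quadratically, $M(L)=\Theta(L^2)$, with only linear increments $M(L)-M(L-1)=O(L)$. The lower bound $M(L)\geq c_1L^2$ yields $\ell(k)=O(\sqrt{k})$. The increment bound is the geometric heart: starting from a cyclically reduced word of length $L$ realizing $M(L)$ and deleting one letter (reducing further if the result is not cyclically reduced) produces a closed curve of length at most $L-1$ whose self-intersection number drops by at most $O(L)$, because the excised arc can meet the remaining curve of length $O(L)$ at most $O(L)$ times; this gives $M(L-1)\geq M(L)-O(L)$. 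Combining the two facts, $I^c_k-k<M(\ell(k))-M(\ell(k)-1)=O(\ell(k))=O(\sqrt{k})$, and hence $I^c_k/k=1+O(1/\sqrt{k})\to 1$, which together with the trivial lower bound completes the proof.

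The main obstacle I anticipate is making the increment estimate $M(L)-M(L-1)=O(L)$ rigorous inside the coding framework: one must check that shortening the word by one letter corresponds to a controlled modification of the geodesic representative that removes only linearly many self-crossings, and that the shortened word still represents an admissible closed curve on $S$. Once the explicit quadratic asymptotics of $M$ and this increment bound are secured, the remaining steps are the elementary identity $I^c_k=M(\ell(k))$ and the sandwiching above, and the exact constant $1$ follows without needing the precise leading coefficient of $M$.
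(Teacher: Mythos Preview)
Your reduction is sound and in fact matches the paper's argument at the structural level: the paper also implicitly uses the identity $I^c_k=M(\ell(k))$ and the sandwich $M(\ell(k)-1)<k\leq M(\ell(k))$. The substantive difference is in how the increment bound $M(L)-M(L-1)=O(L)$ is obtained. You propose to delete a letter from a maximizing word and argue that this costs at most $O(L)$ self-intersections; you correctly flag this as the sticking point, and indeed making it rigorous in the Bowen--Series framework (tracking how all cyclic shifts and their linking relations change under deletion and subsequent reduction) is not immediate.

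The paper bypasses this entirely. Theorems~\ref{D.G} (Diop--Gaye) and~\ref{C.P} (Chas--Phillips) give the sharp upper bounds $M(L)\leq L^2/4$ (pair of pants) and $M(L)\leq (L-2)^2/4$ (punctured torus), and the explicit curves $a(a\bar b)^n$, $a^{n+1}b^{n+1}$, $a^{n+1}b^{n+2}$ in Lemmas~\ref{pants} and~\ref{torus} realize these maxima (for odd $L$ in the pants case, and for all $L$ in the torus case). From these one reads off directly that for $n^2-n<k\leq n^2+n$ the combinatorial $k$-systoles have length at most $2n+1$ (pants) or $2n+3$ (torus), hence $k\leq I^c_k\leq n^2+n$, and the limit follows. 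In your language, these theorems already compute $M(L)$ exactly (or within $O(L)$), so the increment bound is a one-line consequence and the letter-deletion heuristic is unnecessary.

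So: your outline is correct, but the cleanest way to fill the one gap you identify is not the deletion argument---it is to cite Theorems~\ref{D.G} and~\ref{C.P} together with the explicit maximizing curves, exactly as the paper does.
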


\begin{paragraph}{Outline of the paper:} Section ~\ref{sec2} is about Bowen-Series coding and self-intersection numbers of  closed geodesics when we endow $S$ with a hyperbolic metric. In Section ~\ref{sec3}, we give the proof of Theorem \ref{thm1} and Theorem \ref{thm2}.  
\end{paragraph}

\section{Preliminaries}\label{sec2}
In this section, we explain Bowen-Series coding of closed geodesics on $S$, and we explain how to derive their self-intersection numbers using it. 
\begin{paragraph}{Bowen-series coding of geodesics on $S$:}
Let ~$\D$ denote the Poincar\'e disk endowed with the hyperbolic metric and $O$ its center. For every hyperbolic isometry $h$, we denote by $D(h)$ the half-space bounded by the perpendicular bisector of the segment $[O,h(O)]$ and containing $h(O)$. Let $a$ and $b$ be two hyperbolic isometries of $\D$ with inverse $\bar{a}$ and $\bar{b}$ such that: $$[D(a)\cup D(\bar{a})]\cap [D(b)\cup D(\bar{b})]=\emptyset.$$

The group $\Gamma:=\langle a, b\rangle$ is a Schottky group of rank $2$ (see \cite{Bea}, \cite{Dal}) and $S:=\D/\Gamma$ is a hyperbolic surface homeomorphic to a punctured torus or a pair of pants, depending on the configuration of the domains $D(e)$; $e\in\{a, \bar{a}, b, \bar{b}\}$ (see Figure~\ref{sch}).

In this case, the group $\Gamma$ is a purely hyperbolic  Fuchsian group and the set $\mathcal{P}=\ds\bigcap_{e\in\{a, \bar{a}, b, \bar{b}\}}\D-\overset{\circ}{D}(e)$  is a fundamental domain of $\D$ for the action of $\Gamma$ on $\mathbb{D}$. 
\begin{figure}[htbp]
\begin{center}
\includegraphics[scale=0.35]{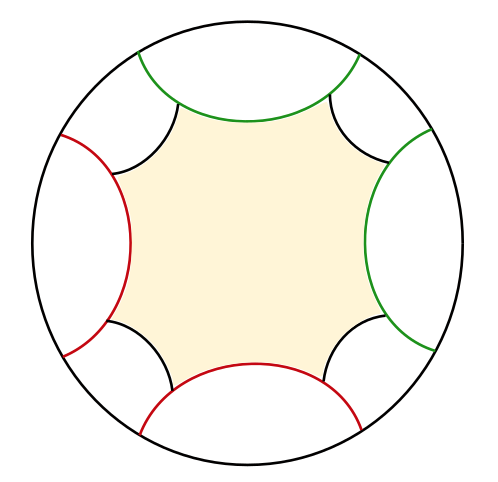}\hspace{2cm}
\includegraphics[scale=0.35]{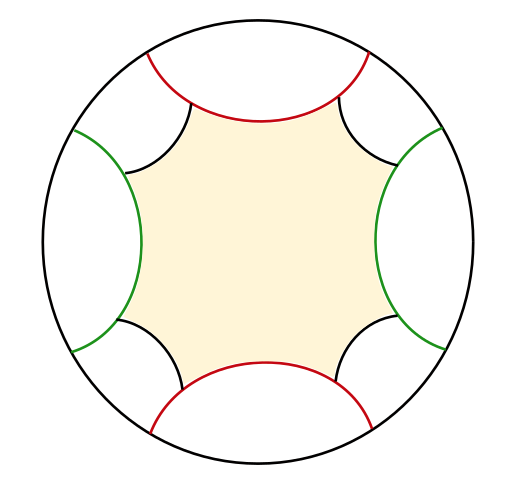}\vspace{0,3cm}\put(-210,-15){\Large{$\downarrow$}}\put(-48,-15){\Large{$\downarrow$}}\put(-174,40){\tiny{$D(a)$}} \put(-200,72){\tiny{$D(\bar{a})$}}\put(-233,40){\tiny{$D(b)$}}\put(-200,10){\tiny{$D(\bar{b})$}} \put(-24,40){\tiny{$D(a)$}} \put(-50,72){\tiny{$D(b)$}}\put(-83,40){\tiny{$D(\bar{a})$}}\put(-50,10){\tiny{$D(\bar{b})$}}\\
\includegraphics[scale=0.35]{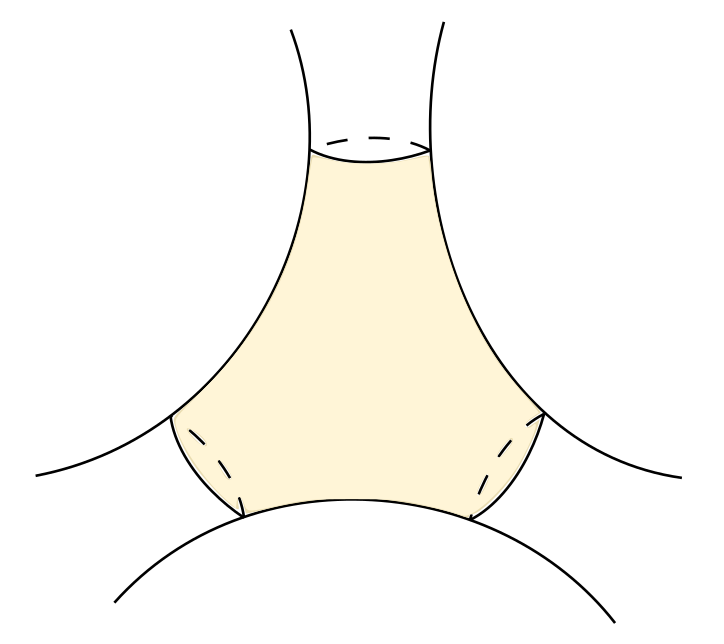}\hspace{2cm}
\includegraphics[scale=0.35]{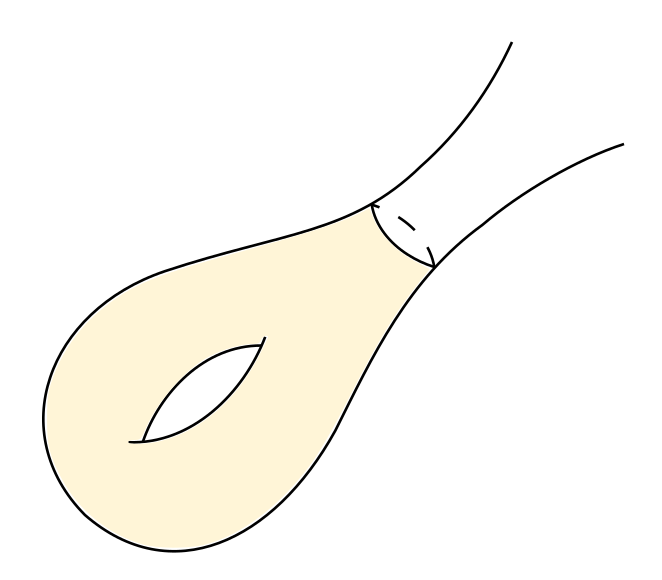}
\caption{\footnotesize{Configuration of the domains $D(a)$, $D(\bar{a})$, $D(b)$, $D(\bar{b})$ and the surfaces obtained in the quotient. The shaded regions are the smallest regions that contain all closed geodesic.}}
\label{sch}
\end{center}
\end{figure}

We denote by $W^+$ (respectively $W$) the set of infinite (respectively bi-infinite) reduced words on the alphabet $\{a, \bar{a}, b, \bar{b}\}$; and by $\sigma$ the shift map on $W$ defined by:
$$\sigma((e_i)_{i\in\Z}):=(e_{i+1})_{i\in\Z}.$$

Let $\Lambda:=\overline{\Gamma.O}\cap \partial \D$ be the limit set of $\Gamma$; where $\Gamma.O$ denotes the orbit of the origin $O$. 

The map 
\begin{align*}
\psi:W^+&\longrightarrow \Lambda\\
                     w:=e_0e_1e_2...&\longmapsto \underset{n\rightarrow+\infty}{\lim} e_0e_1e_2...e_n(0).
\end{align*}
is a bijection (see \cite{BS}) and for $\eta\in\Lambda$, we refer to $\psi^{-1}(\eta)$ as the coding of $\eta$.

Let $\widetilde{\gamma}$ be an oriented geodesic of $\mathbb{D}$ with endpoints $(\widetilde{\gamma}^-,\widetilde{\gamma}^+)\in\Lambda\times\Lambda$ and assume that $\widetilde{\gamma}$ intersects $\mathcal{P}$. Let $w_{\widetilde{\gamma}^+}=e_0e_1e_2...$ and $w_{\widetilde{\gamma}^-}=e_{-1}e_{-2}...$ be the coding of $\widetilde{\gamma}^+$ and $\widetilde{\gamma}^-$. Since $\gamma\cap\mathcal{P}\neq\emptyset$, then $e_0\neq e_1$ and the bi-infinite reduced word $W_{\widetilde{\gamma}}:=...\bar{e}_{-2}\bar{e}_{-1}e_0e_1e_2...$ is the coding of the geodesic $\widetilde{\gamma}$ when we fix $\mathcal{P}$ as the origin tile.

The word $W_{\widetilde{\gamma}}$ encodes the way $\widetilde{\gamma}$ crosses infinitely many copies of the tiles of $\mathbb{D}$. In fact, $\mathcal{P}$ tiles $\D$ and a lift $\widetilde{\gamma}$ that intersects $\mathcal{P}$ goes infinitely many times  across the tiles (following the two directions of $\widetilde{\gamma}$ and starting at $\mathcal{P}$). Moreover, if $g.\mathcal{P}$ and $h.\mathcal{P}$ are two adjacent tiles in $\D$, then $h=g.e$ with $e\in\{a,\bar{a}, b, \bar{b}\}$. Therefore, if $(g_n.\mathcal{P})_{n\in\Z}$ is the bi-infinite sequence of adjacent tiles crossed by $\widetilde{\gamma}$ with $g_0=\mathrm{Id}$, the coding associated to $\widetilde{\gamma}$ corresponds to: $$W_{\widetilde{\gamma}}:=(e_i)_{i\in\mathbb{Z}}:=...e_{-1}e_0e_1...$$
where $e_i\in\{a, \bar{a}, b, \bar{b}\}$ is such that $g_{i+1}=g_i.e_i$. 
The word $W_{\widetilde{\gamma}}$ is reduced, i.e, two consecutive letters do not cancel each other.

\begin{lemma}\label{lem}
Let $\widetilde{\gamma}$ and $\widetilde{\gamma}'$ be two geodesics of $\mathbb{D}$ whose endpoints are in $\Lambda$. Assume that $\widetilde{\gamma}$ and $\widetilde{\gamma}'$ intersect $\mathcal{P}$ and there is $g\in\Gamma$ such that $\widetilde{\gamma}'=g(\widetilde{\gamma})$. Then, then there exists $p\in\mathbb{Z}$ such that $W_{\widetilde{\gamma}'}=\sigma^p(W_{\widetilde{\gamma}})$.
\end{lemma}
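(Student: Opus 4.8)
The plan is to exploit the $\Gamma$-equivariance of the tiling $\{h\mathcal{P}:h\in\Gamma\}$ of $\D$, since this tiling is exactly the structure that the coding $W_{\widetilde{\gamma}}$ records. Recall that $W_{\widetilde{\gamma}}=(e_i)_{i\in\Z}$ is determined by the bi-infinite sequence $(g_n\mathcal{P})_{n\in\Z}$ of tiles crossed by $\widetilde{\gamma}$, normalized so that $g_0=\mathrm{Id}$ (taking $\mathcal{P}$ as the origin tile is legitimate precisely because $\widetilde{\gamma}$ meets $\mathcal{P}$), with $e_i=g_i^{-1}g_{i+1}$. So the whole problem reduces to tracking how this sequence of tiles changes when we apply $g$.

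First I would observe that, since $g\in\Gamma$ is an isometry of $\D$ that permutes the tiles and preserves their adjacency, the geodesic $\widetilde{\gamma}'=g(\widetilde{\gamma})$ crosses exactly the tiles $g(g_n\mathcal{P})=(gg_n)\mathcal{P}$, and in the same order. Thus the (not-yet-normalized) crossing sequence of $\widetilde{\gamma}'$ is $(gg_n\mathcal{P})_{n\in\Z}$. To put the coding of $\widetilde{\gamma}'$ into normal form I must locate the index at which the origin tile $\mathcal{P}$ appears, and here I would use the hypothesis that $\widetilde{\gamma}'$ intersects $\mathcal{P}$: this forces $\mathcal{P}$ to occur in the crossing sequence of $\widetilde{\gamma}'$, so there is an index $n_0$ with $gg_{n_0}\mathcal{P}=\mathcal{P}$. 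Because $\Gamma$ is a Schottky group acting freely and properly discontinuously (the stabilizer of $\mathcal{P}$ is trivial), this gives $gg_{n_0}=\mathrm{Id}$, i.e. $g_{n_0}=g^{-1}$. Setting $g'_n:=gg_{n+n_0}$ we then have $g'_0=\mathrm{Id}$, so $(g'_n\mathcal{P})_{n\in\Z}$ is the correctly normalized crossing sequence of $\widetilde{\gamma}'$.

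Finally I would compute the resulting coding directly: $e'_i=(g'_i)^{-1}g'_{i+1}=g_{i+n_0}^{-1}g^{-1}g\,g_{i+1+n_0}=g_{i+n_0}^{-1}g_{i+1+n_0}=e_{i+n_0}$, whence $W_{\widetilde{\gamma}'}=(e_{i+n_0})_{i\in\Z}=\sigma^{n_0}(W_{\widetilde{\gamma}})$, proving the claim with $p=n_0$.

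The only point requiring genuine care — the main (and rather mild) obstacle — is the normalization step: one must be sure that $\mathcal{P}$ actually occurs in the crossing sequence of $\widetilde{\gamma}'$ and that the index $n_0$ is well defined. This rests on two structural facts. The tile $\mathcal{P}$ is an intersection of the half-planes bounded by the geodesic perpendicular bisectors of the segments $[O,e(O)]$, hence convex, so $\widetilde{\gamma}'$ meets it in a single geodesic arc and $\mathcal{P}$ appears exactly once in the sequence; and the freeness of the action guarantees that $gg_{n_0}\mathcal{P}=\mathcal{P}$ really does force $gg_{n_0}=\mathrm{Id}$ rather than merely a nontrivial stabilizing element. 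Everything else in the argument is a formal consequence of the equivariance of the tiling under $\Gamma$.
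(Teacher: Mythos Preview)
Your argument is correct and follows exactly the paper's approach: translate the tile sequence by $g$, use the hypothesis $\widetilde{\gamma}'\cap\mathcal{P}\neq\emptyset$ to find the index where the identity tile appears, and read off the shift. You are in fact slightly more careful than the paper in justifying that $gg_{n_0}\mathcal{P}=\mathcal{P}$ forces $gg_{n_0}=\mathrm{Id}$ via freeness of the action.
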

\begin{proof} 
Let $W_{\widetilde{\gamma}}:=...e_{-1}e_0e_1...$ be the coding of $\widetilde{\gamma}$. So, the sequence of tiles crossing by $\widetilde{\gamma}$ is given by $(g_n(\mathcal{P}))_{n\in\mathbb{Z}}$ with $g_0=\mathrm{Id}$. Since  there is $g\in\Gamma$ such that $\widetilde{\gamma}'=g(\widetilde{\gamma})$,  $\widetilde{\gamma}'$ crosses the sequence $(h_n(\mathcal{P}))_{n\in\mathbb{Z}}$ where $h_n=g.g_n$ and $h_{n+1}=h_n.e_n$ for all $n\in \mathbb{Z}$. Since $\widetilde{\gamma}'$ crosses $\mathcal{P}$, there exists $p\in\mathbb{Z}$ such that $h_p=\mathrm{Id}$. Then, $W_{\widetilde{\gamma}'}:=...e'_{-1}e'_0e'_1...$ where $e'_{k}=e_{k+p}$ which implies that $W_{\widetilde{\gamma}'}=\sigma^p(W_{\widetilde{\gamma}})$.   
\end{proof}
Let $\gamma$ be a closed geodesic on $S$ and $\widetilde{\gamma}$ a lift of $\gamma$ that crosses $\mathcal{P}$. The endpoints of $\widetilde{\gamma}$ are in $\Lambda$ and   $W_{\widetilde{\gamma}}$ is periodic namely $W_{\widetilde{\gamma}}=...w_{\widetilde{\gamma}}w_{\widetilde{\gamma}}w_{\widetilde{\gamma}}...$ where $w_{\widetilde{\gamma}}:=e_0e_1...e_m\in\Gamma=\pi_1(S)$ is cyclically reduced, i.e, all the cyclic permutation of $w_{\widetilde{\gamma}}$ are reduced. The word $w_{\widetilde{\gamma}}$ also correspond to the homotopy class of  $\gamma$ in $S$ and the length of $w_{\widetilde{\gamma}}$  does not depend on the choice of the lift crossing $\mathcal{P}$ and we refer to it as  \textit{\textbf{the combinatorial length}} of $\gamma$. The Bowen-Series coding of a closed geodesic $\gamma$ is the cyclic class of $w_{\widetilde{\gamma}}$ where $\widetilde{\gamma}$ is a lift of $\gamma$ that crossing $\mathcal{P}$ and with coding $W_{\widetilde{\gamma}}=...w_{\widetilde{\gamma}}w_{\widetilde{\gamma}}...$

\end{paragraph}
\begin{paragraph}{Self-intersection of geodesics in $S$:}
Here, we explain how to compute self-intersection using the coding of a closed geodesic and the relation between self-intersection and combinatorial length.

Two lifts $\widetilde{\gamma}_1$ and $\widetilde{\gamma}_2$, with endpoints $(\widetilde{\gamma}_1^-,\widetilde{\gamma}_1^+)$ and $(\widetilde{\gamma}_2^-,\widetilde{\gamma}_2^+)$ respectively, intersect if and only if their endpoints  are cyclically ordered like $$\widetilde{\gamma}_1^-<\widetilde{\gamma}_2^-<\widetilde{\gamma}_1^+<\widetilde{\gamma}_2^+ \quad(*).$$

The self-intersection number of a closed geodesic $\gamma$ is equal to the number of time its lifts intersect in the fundamental domain $\mathcal{P}$.

\begin{figure}[htbp]
\begin{center}
\includegraphics[scale=0.35]{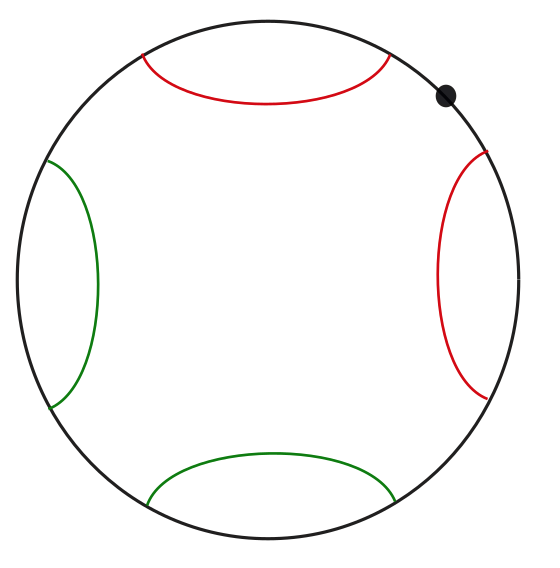}\put(-17,82){\small{$A$}}\put(-7,50){\footnotesize{$D(\bar{a})$}} \put(-55,100){\footnotesize{$D(a)$}}\put(-118,50){\footnotesize{$D(\bar{b})$}}\put(-55,-4){\footnotesize{$D(b)$}}\hspace{2cm}
\includegraphics[scale=0.35]{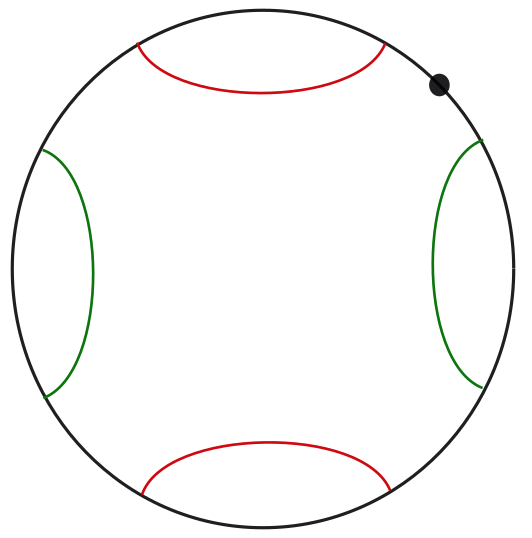}\put(-14,80){\small{$A$}}
\put(-2,50){\footnotesize{$D(b)$}} \put(-55,98){\footnotesize{$D(a)$}}\put(-115,50){\footnotesize{$D(\bar{b})$}}\put(-55,-8){\footnotesize{$D(\bar{a})$}}
\caption{\footnotesize{The order $<_A$ defined by a fixed point $A\in \partial\D$. In the hyperbolic torus, we obtain $a<_A\bar{b}<_A\bar{a}<_Ab$. For the pair of pants, we obtain $a<_A\bar{b}<_Ab<_A\bar{a}$.}}
\label{ordre}
\end{center}
\end{figure}

\noindent An {\bf \textit{alphabet}} is a finite ordered set of distinct symbols. For $A\in\partial \D\cap\overline{\mathcal{P}}$ fixed, we obtain an order $<_A$ on the set $\overline{\Gamma}=\{a,\bar{a},b,\bar{b}\}$ (and then an alphabet) by walking counter-clockwise on $\partial \D$ starting at $A$ and recording the order in which the domains $D(a)$, $D(\bar{a})$, $D(b)$ and $D(\bar{b})$ appear (See Figure \ref{ordre}).  
Depending on the position of $A$, we obtain four different alphabets. We denote by $\Gamma_e$ the alphabet whose first letter is $e$.
This order induces an order in $W^+$ as follow:
$$e_0e_1...<_A e'_0e'_1...\iff 
\left\{\begin{array}{ll}
  e_1<_A e'_1 \ \  \ if\ e_1\neq e'_1; or
 \\
 \\
    \exists\hspace{0.1cm} k>1, e_i=e'_i\hspace{0.1cm} \mathrm{for}\hspace{0.1cm} i<k, e_k<_A e'_k \  in\ \Gamma_{\bar{e}_{k-1}}
  \end{array}\right.$$

\noindent J. Birman and C. Series showed in \cite{Bis1}-Theorem A that the order ~$<_A$ on $W^+$ is compatible with the order on the endpoints.

It follows that using $(*)$, one can see when two lifts $\widetilde{\gamma}_i$ and  $\widetilde{\gamma}_j$ intersect or not using their codings. Since we are interested to intersection points in $\mathcal{P}$ between all the lifts, we can restrict to lifts of $\gamma$ that intersect the fundamental domain $\mathcal{P}$.

Let $\gamma$ be a closed geodesic on $S$ and $w_{\widetilde{\gamma}}=e_1...e_n$ its coding. Let $w_i:=e_ie_{i+1}...e_ne_1...e_{i-1}$ and $\widetilde{\gamma}_i$  the lift of $\gamma$ with extremities $\widetilde{\gamma}^+_i:=w_iw_i...$ and $\widetilde{\gamma}^-_i:=\bar{w}_i\bar{w}_i...$ 

We denote by $\mathcal{L}_{cyc}(\gamma)$ the set of lifts of $\gamma$ obtained as above. As a direct corollary of Lemma \ref{lem}, we have: 

 \bpro\label{lift}
Let $\gamma$ be a closed geodesic on $S$. A lift $\widetilde{\gamma}$ of $\gamma$ intersects the fundamental domain $\mathcal{P}$ if and only if $\widetilde{\gamma}\in\mathcal{L}_{cyc}(\gamma)$.
 \epro

For a closed geodesic $\gamma$ on $S$, we denote by $$I_\gamma=\{(\widetilde{\gamma}_i;\widetilde{\gamma}_j), \widetilde{\gamma}_i\cap\widetilde{\gamma}_j\neq\emptyset, \widetilde{\gamma}_i, \widetilde{\gamma}_j\in\mathcal{L}_{cyc}(\gamma), 1\leq i<j\leq L(\gamma)\}$$ the set of intersection points between lifts of $\gamma$ that intersect the fundamental domain $\mathcal{P}$. Computing the self-intersection number of $\gamma$ is equivalent to determine the number of points of $I_\gamma$ lying in the fundamental domain. But it is not easy to know whether two lifts intersect in the fundamental domain or not.
 
 \noindent In order to solve this problem, we define the following relation on $I_\gamma$: we say that two points $(\widetilde{\gamma}_i;\widetilde{\gamma}_j)$ and $(\widetilde{\gamma}_k;\widetilde{\gamma}_l)$ are equivalent and we write $(\widetilde{\gamma}_i;\widetilde{\gamma}_j)\sim (\widetilde{\gamma}_k;\widetilde{\gamma}_l)$ if there exists an isometry in $\Gamma$ which maps the pair of geodesics $(\widetilde{\gamma}_i,\widetilde{\gamma}_j)$ to the pair $(\widetilde{\gamma}_k,\widetilde{\gamma}_l)$.
 
The self-intersection number of $\gamma$ is equal to the cardinal of the quotient of $I_\gamma$ by this equivalence relation:
 $$i(\gamma;\gamma)=\#\hspace{0.1cm} I_\gamma/\sim.$$
 
 The number $i(\gamma,\gamma)$ is bounded from above by the number of pairs $(\widetilde{\gamma}_i,\widetilde{\gamma}_j)$ where $i\neq j$ and each $\widetilde{\gamma}_i$ intersects the fundamental domain. By Proposition \ref{lift} the number of $\widetilde{\gamma}_i$ intersecting the fundamental domain is equal to $L(\gamma)$. So, we have the following upper bound:
 $$i(\gamma,\gamma)\leq \frac{L(\gamma)(L(\gamma)-1)}{2}.$$
 
 The first two authors use the machinery described in this section to give  upper bounds when $S$ is a pair of pants. They proved the following:

 \bthm[Diop-Gaye, \cite{aziz2021self}]\label{D.G}
 Assume that $S$ is a pair of pants and let $\gamma$ be a primitive non-simple closed geodesic on $S$. Then,
 $$ i(\gamma;\gamma)\leq  \ds\left\{\begin{array}{ll}
 \ds\frac{L^2(\gamma)}{4} \ \ \ \ \ \ \ \ \ \ \mbox{if}  \  \mbox{$L(\gamma)$ is even}\\
 \\
 \ds\frac{L^2(\gamma)-1}{4} \ \ \ \ \ \mbox{if}  \ \mbox{ $L(\gamma)$ is odd} \\
 \end{array}\right.$$
 \ethm\vspace{0.2cm} 

Previous to that, M. Chas and A. Phillips showed, in \cite{chas2010self} a similar result in the case of a punctured torus:

 \bthm[Chas-Philips]\label{C.P}
 
 Assume that $S$ is punctured torus and let $\gamma$ be a primitive non-simple closed geodesic. Then,
 
  $$ i(\gamma;\gamma)\leq  \ds\left\{\begin{array}{ll}
  \ds\frac{\left( L(\gamma)-2\right)^2}{4} \ \ \ \ \ \ \ \ \ \ \ \ \ \ \mbox{if}  \  \mbox{$L(\gamma)$ is even}\\
  \\
  \ds\frac{\left(L(\gamma)-1\right)\left(L(\gamma)-3\right)}{4} \ \ \mbox{if}  \ \mbox{ $L(\gamma)$ is odd} \\
  \end{array}\right.$$
  Moreover, these bounds are sharp.
 \ethm
\end{paragraph}
\section {Proof of theorems \ref{thm1} and \ref{thm2}}\label{sec3}

Now, we turn to the proof of the theorems stated in the introduction. We start with the following lemmas that compute the length of combinatorial $k$-systoles in a pair of pants and a punctured torus under some condition on the choice of $k$. 

\begin{figure}[htbp]
\begin{center}
\includegraphics[scale=0.4]{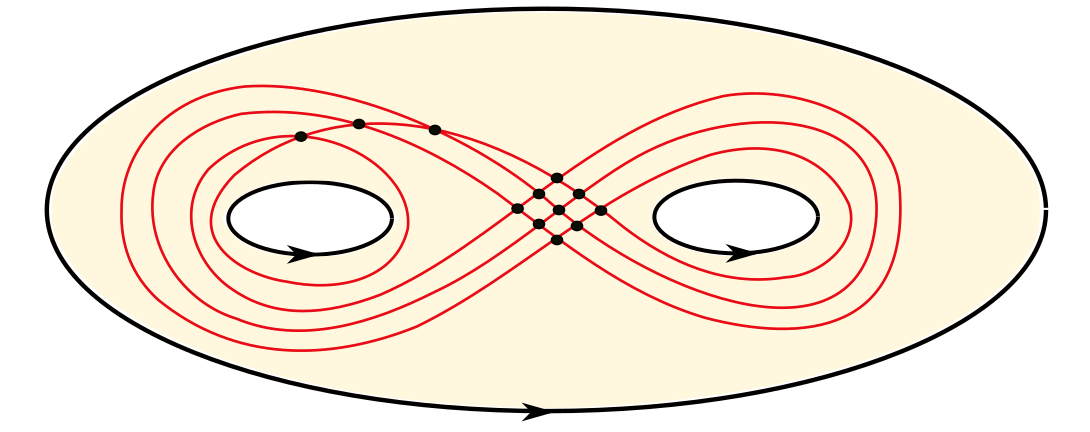}
\put(-115,-7){\small{$ab$}}
\put(-160,40){\small{$a$}}
\put(-75,40){\small{$b$}}
\caption{A geodesic on a pair of pants with coding $a(a\bar{b})^3$.}
\label{nhuit}
\end{center}
\end{figure}

\begin{lemma} [Case of a pair of pants]\label{pants}
Let $S$ be a pair of pants and $n$ an integer.

\begin{itemize}
\item If $n^2<k\leq n^2+n$, then combinatorial $k$-systoles have length equal to $2n+1$. In particular, the geodesic with coding $a(a\bar{b})^n$ is a combinatorial $k$-systole.

\item If $n^2-n< k\leq n^2$, then combinatorial $k$-systoles have length at most $2n+1$.
\end{itemize} 
\end{lemma}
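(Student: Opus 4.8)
The plan is to isolate one explicit family of geodesics, namely the geodesic $\gamma_n$ with coding $a(a\bar b)^n$, whose combinatorial length is $L(\gamma_n)=2n+1$, and to reduce both assertions to the single computation
\[
i(\gamma_n,\gamma_n)=n^2+n .
\]
Granting this, both items follow formally. For the first item, whenever $k\le n^2+n$ the curve $\gamma_n$ satisfies $i(\gamma_n,\gamma_n)=n^2+n\ge k$, so $\gamma_n\in\G_{\geq k}$ and witnesses that a combinatorial $k$-systole has length at most $2n+1$; the matching lower bound is supplied by Theorem \ref{D.G}, as explained below, so the length is exactly $2n+1$ and $\gamma_n$ is itself a combinatorial $k$-systole. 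For the second item the same curve already gives $i(\gamma_n,\gamma_n)=n^2+n\ge n^2\ge k$, hence combinatorial $k$-systoles have length at most $2n+1$, which is all that is claimed there.

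For the length lower bound in the first case I would invoke Theorem \ref{D.G} directly. A cyclically reduced word of even length $2m$ codes a geodesic with self-intersection at most $m^2$, and one of odd length $2m+1$ with self-intersection at most $m^2+m$. Reading these off for every length $\ell\le 2n$ shows that any closed geodesic of length at most $2n$ has self-intersection at most $n^2$ (the maximum being the even-length value $n^2$ at $\ell=2n$). Since the first case assumes $k>n^2$, no such geodesic can lie in $\G_{\geq k}$, so every combinatorial $k$-systole has length at least $2n+1$. Combined with the previous paragraph this pins the length at $2n+1$.

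The real work, and the step I expect to be the main obstacle, is the evaluation $i(\gamma_n,\gamma_n)=n^2+n$. I would run the machinery of Section \ref{sec2}. The $2n+1$ lifts in $\mathcal{L}_{cyc}(\gamma_n)$ are indexed by the cyclic permutations $w_i$ of $w=a(a\bar b)^n$, with endpoints $\widetilde{\gamma}_i^{+}=w_iw_i\cdots$ and $\widetilde{\gamma}_i^{-}=\bar w_i\bar w_i\cdots$; by Proposition \ref{lift} these are exactly the lifts meeting $\mathcal{P}$. A useful simplification is that the upper bound $i(\gamma_n,\gamma_n)\le n^2+n$ is immediate from Theorem \ref{D.G} since $2n+1$ is odd and $\tfrac{(2n+1)^2-1}{4}=n^2+n$; the content is therefore only the lower bound, i.e.\ exhibiting $n^2+n$ distinct $\Gamma$-classes of crossing pairs. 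For each ordered pair $i<j$ I would compare the four endpoints in the order $<_A$ and apply the interleaving condition $(*)$ to decide whether $\widetilde{\gamma}_i$ and $\widetilde{\gamma}_j$ cross. Because $w$ alternates $a$'s and $\bar b$'s after its initial doubled $a$, the comparison of two cyclic permutations is governed by the position of their first disagreement, so the family of crossing pairs admits a closed-form description, after which I pass to the quotient by the equivalence $\sim$.

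The delicate points I anticipate are (a) correctly handling the doubled letter $aa$, which is precisely what makes $\gamma_n$ non-simple and breaks the symmetry a pure word $(a\bar b)^n$ would have, and (b) certifying that the crossing pairs produced are pairwise inequivalent under $\sim$, so the lower count is not inflated by identifying $\Gamma$-orbits. A convenient sanity check before committing to the general pattern is that the answer must saturate the odd-length bound of Theorem \ref{D.G}; verifying the first cases $n=1$ (the word $a^2\bar b$, with self-intersection $2$) and $n=2$ (the word $a^2\bar b a\bar b$, with self-intersection $6$) by hand would confirm the bookkeeping and suggest the right induction on $n$.
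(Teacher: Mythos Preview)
Your argument is correct and follows exactly the paper's approach: use Theorem~\ref{D.G} to force $L(\gamma_k)\ge 2n+1$ when $k>n^2$, and use the explicit curve $a(a\bar b)^n$ with $L=2n+1$ and $i=n^2+n$ as the witness for the upper bound in both items. The only difference is one of emphasis: the paper simply \emph{asserts} $i\bigl(a(a\bar b)^n,a(a\bar b)^n\bigr)=n^2+n$ (with a picture for $n=3$) and moves on, whereas you treat this computation as the heart of the matter and sketch a verification via the Birman--Series ordering and the equivalence $\sim$; your observation that Theorem~\ref{D.G} already gives the upper bound $n^2+n$, so only the lower bound needs work, is a nice shortcut the paper does not spell out.
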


\bproof

\begin{itemize}
\item Let $n$ and $k$ be two integers such that $n^2<k\leq n^2+n$. Let $\gamma_k$ be a combinatorial $k$-systole, then $$i(\gamma_k;\gamma_k)\geq k>n^2.$$ 
By Theorem \ref{D.G}, $L(\gamma_k)> 2n$. It follows that $L(\gamma_k)\geq2n+1$. The geodesic with coding $a\left(a\bar{b}\right)^n$ self-intersects $n^2+n$ times (See Figure ~\ref{nhuit} for a representation of the corresponding geodesic when $n=3$) and its combinatorial length is $2n+1$. This prove the result.

\item For $n^2-n<k\leq n^2$, the geodesic with coding $a(a\bar{b})^n$ has combinatorial length $2n+1$ and self-intersection number $n^2+n$. Therefore, combinatorial $k$-systoles have length at most $2n+1$.
\end{itemize}
\eproof

\begin{figure}[htbp]
\begin{center}
\includegraphics[scale=0.4]{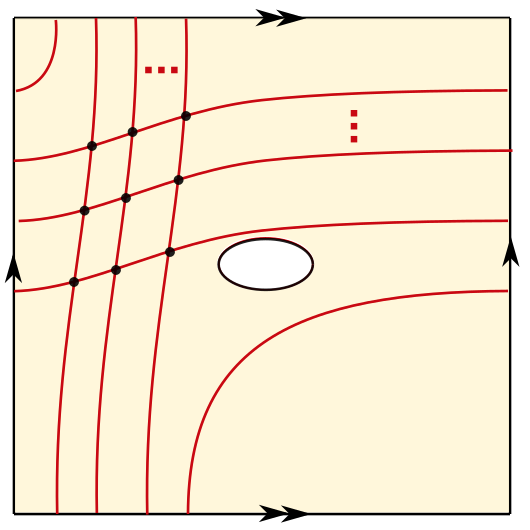}
\caption{A geodesic on the hyperbolic torus with coding $a^nb^m$}
\label{n2torus}
\end{center}
\end{figure}

\begin{lemma}[Case of a punctured torus]\label{torus}
Let $S$ be a punctured torus and $n$ an integer.

\begin{itemize}

\item  If $n^2<k\leq n^2+n$,  then combinatorial $k$-systoles have length equal to $2n+3$. In particular, the geodesic with coding $a^{n+1}b^{n+2}$ is a combinatorial $k$-systole.

\item If $n^2-n< k\leq n^2$,  then combinatorial $k$-systoles have length equal to $2n+2$. In particular, the geodesic with coding $a^{n+1}b^{n+1}$ is a combinatorial $k$-systole.
\end{itemize}
\end{lemma}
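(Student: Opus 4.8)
The plan is to follow the two-sided strategy of Lemma \ref{pants}: invoke the Chas--Philips bound (Theorem \ref{C.P}) to force a lower bound on the length of any combinatorial $k$-systole, then exhibit an explicit geodesic of exactly that length whose self-intersection number is at least $k$. Beyond Theorem \ref{C.P}, the only combinatorial input needed is the self-intersection number of the words $a^{p}b^{q}$, which I would compute from the coding of Section \ref{sec2} to equal $(p-1)(q-1)$; in particular $i(a^{n+1}b^{n+1};a^{n+1}b^{n+1})=n^2$ and $i(a^{n+1}b^{n+2};a^{n+1}b^{n+2})=n^2+n$. These two values coincide exactly with the Chas--Philips upper bounds at lengths $2n+2$ and $2n+3$, so the proposed systoles are precisely the maximal-intersection curves for their length.

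For the first item, let $\gamma_k$ be a combinatorial $k$-systole, so $i(\gamma_k;\gamma_k)\geq k>n^2$. I would rule out every length $L(\gamma_k)\leq 2n+2$: for even $L\leq 2n+2$ Theorem \ref{C.P} gives $i\leq (L-2)^2/4\leq n^2$, and for odd $L\leq 2n+1$ it gives $i\leq (L-1)(L-3)/4\leq n^2-n$, both strictly below $k$. Hence $L(\gamma_k)\geq 2n+3$. Conversely the geodesic with coding $a^{n+1}b^{n+2}$ has length $2n+3$ and self-intersects $n^2+n\geq k$ times, so combinatorial $k$-systoles have length at most $2n+3$; together with the lower bound this pins the length at $2n+3$ and exhibits $a^{n+1}b^{n+2}$ as such a systole.

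The second item is the same argument with the thresholds shifted by one. Now $i(\gamma_k;\gamma_k)\geq k>n^2-n$, and every length $L(\gamma_k)\leq 2n+1$ is excluded because Theorem \ref{C.P} caps the self-intersection by $n^2-n$ (odd case, $L=2n+1$) or by $(n-1)^2\leq n^2-n$ (even case, $L\leq 2n$), so $L(\gamma_k)\geq 2n+2$. Since $a^{n+1}b^{n+1}$ has length $2n+2$ and self-intersects $n^2\geq k$ times, the bound $2n+2$ is attained and this curve is a combinatorial $k$-systole.

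The only genuinely technical step is the self-intersection count for $a^{p}b^{q}$, mirroring the assertion $i(a(a\bar b)^n)=n^2+n$ used in Lemma \ref{pants}. I would set $w=a^{p}b^{q}$, list its $p+q$ cyclic lifts $\widetilde\gamma_i\in\mathcal{L}_{cyc}$, decide which pairs are linked using the crossing criterion $(*)$ together with the torus order $a<_A\bar b<_A\bar a<_A b$ of Figure \ref{ordre}, and then pass to the quotient by $\Gamma$ as in $i(\gamma;\gamma)=\#\,I_\gamma/\sim$. I expect the bookkeeping --- in particular deciding which linked pairs are $\Gamma$-equivalent so as not to overcount --- to be the main obstacle, though the requirement that the count match the sharp Chas--Philips value provides a strong check. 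Everything else reduces to the elementary inequalities above.
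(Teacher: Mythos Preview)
Your proposal is correct and follows essentially the same two-sided argument as the paper: use Theorem \ref{C.P} to bound $L(\gamma_k)$ from below, then exhibit $a^{n+1}b^{n+2}$ (resp.\ $a^{n+1}b^{n+1}$) to realize that length. The only difference is cosmetic---the paper simply asserts the self-intersection values $n^2+n$ and $n^2$ (pointing to the figure and the sharpness clause in Theorem \ref{C.P}) rather than deriving the formula $i(a^{p}b^{q})=(p-1)(q-1)$ from the coding as you propose; your extra care in separating the even and odd cases of the Chas--Philips bound is also a bit more explicit than the paper's version.
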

\bproof
\begin{itemize}
\item Let $n$ be an integer and $k$ an integer such that $n^2<k\leq n^2+n$. Let $\gamma_k$ be a combinatorial $k$-systole. Therefore, $$i(\gamma_k;\gamma_k)\geq k>n^2.$$ Again, by Theorem \ref{C.P}, $L(\gamma_k)> 2n+2$ which implies that  $L(\gamma_k)\geq2n+3$. The geodesic with coding $a^{n+1}b^{n+2}$ self-intersects $n^2+n$ times (see Figure \ref{n2torus} for a representative of $a^{n+1}b^{n+2}$) and its combinatorial length is $2n+3$. This prove the result.

\item Let $n$ be an integer and $k$ an integer such that $n^2-n<k\leq n^2$. 
Let $\gamma$ be a geodesic such that $L(\gamma)<2n+2$.

By Theorem \ref{C.P},
$$i(\gamma,\gamma)\leq\frac{(2n+1-1)(2n+1-3)}{4}=n^2-n.$$
Thus, $i(\gamma,\gamma)\leq n^2-n<k$ when $L(\gamma)<2n+2$. Therefore when $n^2-n<k\leq n^2-n$, combinatorial $k$-systoles have length greater than $2n+2$. The geodesic with coding $a^{n+1}b^{n+1}$ self-intersects $n^2$ times and its combinatorial length is $2n+2$. This proves the result.
\end{itemize}
\eproof

\noindent Now, we prove our main results namely Theorem \ref{thm1} and Theorem \ref{thm2}.

\bproof[Proof of Theorem 1]
We recall that $S$ is a pair of pants or a punctured torus. 
Let $k$ be an integer.

First, for $k=n^2+n$ combinatorial $k$-systoles have self-intersection equal to $k$ for a pair pants and a punctured torus. In fact, the geodesic with coding $a(a\bar{b})^n$ (respectively $a^{n+2}b^{n+1}$) has self-intersection number equal to $n^2+n$ when $S$ is a pair of pants (respectively a punctured torus). If $\gamma$ is a closed geodesic on $S$ such that $i(\gamma,\gamma)>n^2+n$, Lemma \ref{D.G} and Lemma \ref{C.P} imply that $L(\gamma)>2n+1$  for a pair of pants or $L(\gamma)>2n+3$ in the case of a punctured torus. 

So, for $k=n^2+n$ combinatorial $k$-systoles are exact that is $I^c_k=k$ and are realized by the geodesic mention above. Thus, $$\underset{k\rightarrow+\infty}{\liminf} \hspace{0.2cm}I^c_k-k=0.$$

Now, let $k$ be an integer such that $n^2<k<n^2+n$. The geodesic $a(a\bar{b})^n$ (respectively $a^{n+1}b^{n+2}$) is a combinatorial $k$-systole for a pair of pants (respectively for a punctured torus) and it self-intersects $n^2+n$ times. So, combinatorial $k$-systoles have length $2n+1$ (respectively $2n+3$) when $S$ is a pair of pants (respectively a punctured torus). Theorem \ref{D.G} and Theorem \ref{C.P} imply that the self-intersection number of combinatorial $k$-systoles is bounded from above by $n^2+n$. It follows that $I_k^c=n^2+n$. Thus : $$\underset{k\rightarrow+\infty}{\limsup} \hspace{0.2cm}I_k^c-k=+\infty.$$
This achieves the proof of Theorem \ref{thm1}.
\eproof
\bproof[Proof of Theorem 2]
For any integer $k$, there exists an integer $n$ such that: $$n^2-n<k\leq n^2 \ \ \mbox{or}\ \ n^2<k\leq n^2+n.$$ 

Lemma \ref{pants} and Lemma \ref{torus} together with Theorem \ref{D.G} and Theorem \ref{C.P} imply that: 

$$n^2-n<k\leq I^c_k\leq n^2+n \ \ \mbox{or}\ \ n^2<k\leq I^c_k\leq n^2+n.$$

These inequalities imply that:

$$\underset{k\rightarrow+\infty}{\lim}\frac{I^c_k}{k}=1.$$
\eproof

\begin{paragraph}{Acknowledgements:} The authors would like to thank the project NLAGA for their support. We are also grateful to Hugo Parlier for his comments on the early version of this article. To the anonymous referee, the authors are also grateful for all his comments and suggestions.  

\end{paragraph}

\bibliographystyle{plain}

\bibliography{biblio}
\textbf{Departement of mathematics, Universit\'e  Cheikh Anta Diop, Dakar, Senegal.}\\
\textit{email: azizdiop55@gmail.com}

\noindent \textbf{Departement of mathematics, Universit\'e  Cheikh Anta Diop, Dakar, Senegal.}\\
\textit{email: masseye.gaye@ucad.edu.sn}

\noindent \textbf{Departement of mathematics, Universit\'e Cheikh Anta Diop, Dakar, Senegal.}\\
\textit{email\string: karimka02@hotmail.fr}
\end{document}